\numberwithin{equation}{section}
\numberwithin{figure}{section}
\theoremstyle{plain}
\newtheorem{thm}{Theorem}[section]
  \theoremstyle{definition}
  \theoremstyle{plain}
\numberwithin{equation}{section}
\date{\today}%\author{Yonatan Gutman}
\newtheorem{lemma}{Lemma}[section]\newtheorem{theorem}[lemma]{Theorem}\theoremstyle{theoreml}\theoremstyle{proposition}\theoremstyle{corollary}\theoremstyle{definition}\newtheorem{definition}[lemma]{Definition}
\newcommand{\beq}{\begin{equation*}}\newcommand{\eeq}{\end{equation*}}\newcommand{\be}{\begin{equation}}\newcommand{\ee}{\end{equation}}\newcommand{\bp}{\begin{proof}}\newcommand{\ep}{\end{proof}}\newcommand{\bd}{\begin{definition}}\newcommand{\ed}{\end{definition}}\newcommand{\bt}{\begin{theorem}}\newcommand{\et}{\end{theorem}}%\newcommand{\bco}{\begin{comment}}
\newcommand{\Z}{\mathbb{Z}}
\def\ed{\textrm{End}}
\def\cP{{\mathcal P}}
\def\bt{{\bf t}}
\def\chix{{\raise.5ex\hbox{$\chi$}}}
\def\Z{{\mathbb Z}}
\begin{document}

\author{Lewis Bowen \& Yonatan Gutman}

\title{Corrigendum\\ \vspace{0.5 cm} 
\texttt{L. Bowen. Nonabelian free group actions: Markov processes, the Abramov-
Rohlin formula and Yuzvinskii's formula. Ergodic Theory Dynam. Systems
30 (2010), no. 6, 1629-1663.}}

\subjclass[2010]{37A35, 20E05.}

%\begin{abstract}
%The classical \textit{Juzvinski{\u{\i}} Addition Theorem} states
%that the entropy of an automorphism of a compact group decomposes
%along invariant subgroups. Thomas generalized the theorem to a skew-product
%setting. Using L. Bowen's \textit{f-invariant }we prove the addition
%theorem for actions of finitely generated free groups on skew-products
%with compact totally disconnected groups or finitely dimensional Lie Groups (correcting an error from \cite{Bo10c})
%and discuss examples. %This result is used to cast light
%on an example of Ornstein and Weiss in \cite{OW87}.
%\end{abstract}

\keywords{Juzvinskii addition formula, f-entropy, Rokhlin-Abramov addition
formula, finitely generated free groups.}

%\tableofcontents{}

%\section{Erratum to \cite{Bo10c}}

%\label{sec:err}

\maketitle
\section{Introduction}

The paper \cite{Bo10c} proved the Rohlin-Abramov addition formula
and a version of Yuzvinskii's addition formula for actions of free
groups. The proofs have errors. We show here that the proof of the
Rohlin-Abramov formula can be fixed with only a few minor modifications.
We cannot fix the prove of Yuzvinskii's formula in general. However,
in \cite{BG}, we prove it under a mild technical condition for skew
products with compact totally disconnected groups or compact finite
dimensional Lie groups. We conjecture that the formula holds in complete
generality. We also show that Proposition 12.1 of \cite{Bo10c} is
incorrect and obtain a counterexample.
\subsection*{Acknowledgments}
We would like to thank the referee for pointing out a much shorter and elegant counterexample than our original counterexample to Proposition 12.1.

\section{The Abramov-Rohlin addition formula}

\cite[Lemma 9.3]{Bo10c} is incorrect because the support of $\nu$
is not contained in the image of $\phi$ in general. However, the
proof of \cite[Lemma 9.3]{Bo10c} remains correct when $\beta=\alpha^{n}$
(see justification below). This special case is the only case used
to prove \cite[Theorem 9.1]{Bo10c} and the Abramov-Rokhlin addition
formula \cite[Theorem 1.3]{Bo10c}. So those theorems hold as stated.
\begin{proof} {[}Justification of a key step in the proof of Lemma
9.3{]} We now justify the claim that the proof of \cite[Lemma 9.3]{Bo10c}
remains correct when $\beta=\alpha^{n}$. Recall that $K$ is a finite
set and $G=\langle s_{1},\ldots,s_{r}\rangle$ is a finitely generated
free group or semi-group. In the group case, let  $S=\{s_1^{\pm 1}, \ldots, s_r^{\pm 1}\}$. In the semi-group case, let $S=\{s_1,\ldots, s_r\}$. Let $B(e,n)\subset G$ denote the ball of
radius $n$ centered at the identity element (with respect to the
word metric). Let $L=K^{B(e,n)}$ and denote by $\{U_{g}\}_{g\in G}$, respectively $\{T_{g}\}_{g\in G}$,
the $G$-shift action on $L^{G}$, respectively $K^{G}$. Let $\phi:K^{G}\to L^{G}$ be the
map
\[
\phi(x)(g)(f)=x(fg),\quad x\in K^{G},g\in G,f\in B(e,n).
\]
Let $\mu$ be a shift-invariant probability measure on $K^{G}$ and
let $\nu$ be the Markov measure on $L^{G}$ induced from $\phi_{*}\mu$. Denote its support by $\textrm{supp}(\nu)\subset L^G$.  Recall that $\nu$ is the Markov process associated with the invariant transition system $\{P^{s}\}_{s\in S}$, where $P^{s}$ is the stochastic matrix given by $P_{ij}^{s}=\phi_{\ast}\mu(L_{j}^{sg}|L_{i}^{g})$,
$i,j\in L$, $g\in G$, where $L_{i}^{g}=\{z\in L^{G}|\, z(g)=i\}$
(note these quantities do not depend on $g\in G$ by invariance of $\mu$). Fixing $s\in S$, this implies $\nu$ is supported on the collection of all $z\in L^G$ such that if $z \in L_i^g \cap L_j^{sg}$ for some $i,j \in L, g\in G$ then  $P^{s}_{ij}>0$ and therefore $\mu(\phi^{-1}(L^g_i) \cap \phi^{-1}(L^{sg}_j))>0$. So there exists $y=y(z,g,s)\in K^{G}$ with $\phi(y)(g)=z(g)$ and $\phi(y)(sg)=z(sg)$. This is a \textit{local} condition in the sense that $y\in K^{G}$ depends on $g\in G$ and $s\in S$. We will next show that one can find $y_z \in K^G$ satisfying the {\em global} condition $\phi(y_z)=z$. In other words we claim that $\textrm{supp}(\nu)\subset\phi(K^{G})$ (which implies that the proof
of \cite[Lemma 9.3]{Bo10c} remains correct when $\beta=\alpha^{n}$).

To prove the claim, define $\psi:\textrm{supp}(\nu)\to K^{G}$ by $\psi(z)(g)=z(g)(e)$.
It suffices to show that $\phi\psi$ is the identity map on $\textrm{supp}(\nu)$.
Because $\phi$ and $\psi$ are $G$-equivariant, it suffices to prove
that $\phi(\psi(z))(e)=z(e)$ for any $z\in \textrm{supp}(\nu)$. Equivalently, it
suffices to show that for every $f\in B(e,n)$, $\phi(\psi(z))(e)(f)=z(e)(f)$
which, by definition of $\phi$, is equivalent to $\psi(z)(f)=z(e)(f)$.
By definition of $\psi$, this is equivalent to $z(f)(e)=z(e)(f)$.

So let $f\in B(e,n)$. Let us write $f=t_{1}\cdots t_{m}$ where $t_{i}\in S$
and $m\leq n$ is the word length of $f$. Let $f_{i}=t_{i}\cdots t_m$
for $1\le i\le m$. Also let $f_{m+1}=e$ the identity element. Let
$z_{i}$ be the map from $B(f_{i},n)\triangleq B(e,n)f_{i}$, the (left) ball of radius $n$ around $f_{i}$, to $K$ defined by $z_{i}(gf_{i})=z(f_{i})(g)$
for $g\in B(e,n)$. As $f_{i-1}=t_{i-1} f_{i}$, it follows from the definition of $\textrm{supp}(\nu)$ that there exists $y=y(f_i,t_{i-1})\in K^{G}$
so that $z(f_{i})=\phi(y)(f_{i})$ and $z(f_{i-1})=\phi(y)(f_{i-1})$.
Conclude $z_{i}(gf_{i})=y(gf_i)$  and  $z_{i-1}(gf_{i-1})=y(gf_{i-1})$ for $g\in B(e,n)$. We must therefore have that $z_{i}$ and $z_{i-1}$
agree on $B(f_{i},n)\cap B(f_{i-1},n)$ for $2\le i\le m+1$. Therefore,
$z_{1}$ and $z_{m+1}$ agree on the set $\bigcap_{i=1}^{m+1}B(f_{i},n) \ni f$. So
$$z(f)(e)=z_1(f)=z_{m+1}(f)=z(e)(f)$$
as required.

%It is easy to see that $e\in\bigcap_{i=0}^{m}B(f_{i},n)$. Therefore,
%$z(f)(e)=z_{0}(e)=z_{m}(e)=z(f)(f^{-1})$. Applying the last equality to $U_{f^{-1}}z$, conclude
%$z(f^{-1})(e)=(U_{f^{-1}}z)(e)(e)=(U_{f^{-1}}z)(f)(f^{-1})=z(e)(f^{-1})$.
\end{proof}

\section{Proposition 12.1}

The proof of \cite[Proposition 12.1]{Bo10c} relies on the incorrect
\cite[Lemma 9.3]{Bo10c}. Moreover, the statement is incorrect even
when $G=\Z$ because of the next result.

\begin{thm}\label{thm:counter} There exists an ergodic automorphism
$T\in{\rm {Aut}(X,\mu)}$ (where $(X,\mu)$ is a standard probability
space), a finite generating partition $\alpha$ of $X$ and an increasing
sequence $\{\cP_{n}\}_{n=1}^{\infty}$ of finite partitions such that
$\bigvee_{n=1}^{\infty}\cP_{n}$ is the partition into points and
$f_{\mu}(\alpha)=h_{\mu}(T)\ne\liminf_{n\to\infty}H_{\mu}(\cP_{n}|T^{-1}\cP_{n})=\liminf_{n\to\infty}F_{\mu}(\cP_{n})$.
\end{thm}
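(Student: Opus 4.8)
\section*{Proof proposal}

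The plan is to take $T$ to be an irrational rotation of the circle $\mathbb{R}/\mathbb{Z}$ (with Lebesgue measure $\mu$), with the rotation angle $\theta$ chosen carefully, and to take $\mathcal{P}_n$ to be the partition of $\mathbb{R}/\mathbb{Z}$ into the $2^n$ dyadic arcs $[k2^{-n},(k+1)2^{-n})$. Then $T$ is ergodic with $h_\mu(T)=0$, and since every two-arc partition of $\mathbb{R}/\mathbb{Z}$ is generating for an irrational rotation (the rotation orbit being dense) --- or, alternatively, by Krieger's generator theorem, as $0<\log 2$ --- there is a finite generating partition $\alpha$. Because the $f$-invariant of a $\mathbb{Z}$-action reduces to Kolmogorov--Sinai entropy on generating partitions, $f_\mu(\alpha)=h_\mu(T)=0$ for any such $\alpha$; this yields the equality $f_\mu(\alpha)=h_\mu(T)$, while the equality $\liminf_n H_\mu(\mathcal{P}_n\mid T^{-1}\mathcal{P}_n)=\liminf_n F_\mu(\mathcal{P}_n)$ is immediate from the definition of $F_\mu$ in the case $r=1$. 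The sequence $\mathcal{P}_n$ is visibly increasing and $\bigvee_n\mathcal{P}_n$ separates points, so the whole statement reduces to arranging $\liminf_n H_\mu(\mathcal{P}_n\mid T^{-1}\mathcal{P}_n)>0$.

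For this I would first compute the conditional entropy exactly. Write $\beta$ for $-\theta\bmod 1$; then $T^{-1}\mathcal{P}_n$ is the grid $\mathcal{P}_n$ translated by $2^{-n}t_n$, where $t_n:=\{2^n\beta\}\in(0,1)$ (irrationality of $\theta$ is used here, so that $t_n$ is never $0$). Consequently each arc of $\mathcal{P}_n$ meets exactly one endpoint of $T^{-1}\mathcal{P}_n$, at relative position $t_n$, so $\mathcal{P}_n\vee T^{-1}\mathcal{P}_n$ consists of $2^n$ arcs of length $2^{-n}t_n$ and $2^n$ arcs of length $2^{-n}(1-t_n)$. Hence $H_\mu(\mathcal{P}_n\vee T^{-1}\mathcal{P}_n)=n\log 2+H(t_n)$ with $H(t)=-t\log t-(1-t)\log(1-t)$, while $H_\mu(T^{-1}\mathcal{P}_n)=n\log 2$, so $H_\mu(\mathcal{P}_n\mid T^{-1}\mathcal{P}_n)=H(t_n)$.

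It remains to choose $\theta$, equivalently $\beta$, so that $\liminf_n H(t_n)>0$, i.e.\ so that the doubling orbit $t_n=\{2^n\beta\}$ stays bounded away from $0$ and from $1$. I would take $\beta\in(0,1)$ whose binary expansion is not eventually periodic and contains no three consecutive equal digits; such $\beta$ exist in abundance, since the sequences over $\{0,1\}$ avoiding $000$ and $111$ form an uncountable subshift of finite type and hence include irrationals (concretely, concatenate the blocks $01$ and $10$ according to any non-eventually-periodic pattern). Writing $t_n=0.b_{n+1}b_{n+2}b_{n+3}\cdots$ in binary, the constraint that there are no three consecutive equal digits forces $1/8\le t_n\le 7/8$ for every $n$, whence $H(t_n)\ge H(1/8)>0$; therefore $\liminf_n F_\mu(\mathcal{P}_n)\ge H(1/8)>0=h_\mu(T)=f_\mu(\alpha)$, which is what was wanted.

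I do not anticipate a genuine obstacle: everything rests on the single identity $H_\mu(\mathcal{P}_n\mid T^{-1}\mathcal{P}_n)=H(t_n)$ and on the elementary fact that a binary word with no three consecutive equal symbols has every tail value in $[1/8,7/8]$. The only points deserving a line of justification are the standard reduction $f_\mu(\alpha)=h_\mu(T)$ for a generating partition of a $\mathbb{Z}$-action, the applicability of Krieger's theorem (the rotation is an ergodic automorphism of a standard non-atomic probability space of entropy $0<\log 2$), and the observation that the dyadic partitions, being a refining sequence that separates points, satisfy that $\bigvee_n\mathcal{P}_n$ is the partition into points.
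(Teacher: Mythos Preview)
Your argument is correct and takes a genuinely different route from the paper's. The paper works with the $(\tfrac12,\tfrac12)$-Bernoulli shift, lets $\alpha$ be the time-zero partition, and sets $\mathcal{P}_n=\bigvee\{T^j\alpha:\ |j|\le n,\ j\ne n-1\}$; one then checks in two lines that $H_\mu(\mathcal{P}_n\mid T^{-1}\mathcal{P}_n)=2\log 2$ for every $n$, strictly larger than $h_\mu(T)=\log 2$. Your construction instead uses a zero-entropy system (an irrational rotation) together with the most natural refining sequence (dyadic partitions), reducing the problem to the identity $H_\mu(\mathcal{P}_n\mid T^{-1}\mathcal{P}_n)=H(\{2^n\beta\})$ and then choosing the angle so that the doubling orbit stays in $[1/8,7/8]$; the ``no three consecutive equal binary digits'' constraint is a clean way to do this. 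The paper's computation is shorter and gives a constant (not merely bounded-below) conditional entropy, with no need to tune the system. Your example, on the other hand, shows that the phenomenon already occurs for zero-entropy transformations, and your $\mathcal{P}_n$ are manifestly increasing---a hypothesis in the theorem that is not obviously met by the paper's ``missing-coordinate'' partitions, since the excluded index $n-1$ shifts with $n$.
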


\begin{proof} Let $X=\{0,1\}^{\Z}$, $\mu$ the uniform probability
on $\{0,1\}$: $\mu(0)=\mu(1)=\frac{1}{2}$, $\mathfrak{B}$ the Borel
$\sigma-$algebra of $X$ and $T:X\rightarrow X$ the shift $(Tx)_{n}=x_{n+1}$.
Let $\mathcal{X}=(X,\mathfrak{B},\mu^{\mathbb{Z}},T)$ be the $(\frac{1}{2},\frac{1}{2})$-Bernoulli
shift. We denote an element $x\in X$ as a sequence $x=\{x_{n}:~n\in\Z\}$.
Let $\alpha=\{C_{0},C_{1}\}$ be a partition where $C_{i}=\{x\in X:~x_{0}=i\}$ for
$i=0,1$. This is a finite generating partition. It is well-known that $h_{\mu}(T)=\log(2)$.

Let $\cP_{n}=\bigvee\{T^{j}\alpha:~|j|\le n,j\ne n-1\}$. Clearly,
$\bigvee_{n=1}^{\infty}\cP_{n}$ is the partition into points. Observe
that $T^{-1}\cP_{n}\vee\cP_{n}=\bigvee\{T^{j}\alpha:~-n-1\le j\le n\}$.
Therefore
\begin{eqnarray*}
H_{\mu}(\cP_{n}|T^{-1}\cP_{n}) & = & H(T^{-1}\cP_{n}\vee\cP_{n})-H(T^{-1}\cP_{n})\\
 & = & (2n+2)\log(2)-2n\log(2)\\
 & = & 2\log(2)>\log(2)=h_{\mu}(T).
\end{eqnarray*}
In particular, $f_{\mu}(\alpha)=h_{\mu}(T)\ne\liminf_{n\to\infty}H_{\mu}(\cP_{n}|T^{-1}\cP_{n})$
as required. \end{proof}

The proof of the addition theorem, \cite[Theorem 13.1]{Bo10c}, relies
on the incorrect \cite[Proposition 12.1]{Bo10c} (however, nothing
else in \cite{Bo10c} relies on this proposition). We conjecture that
the statement of \cite[Theorem 13.1]{Bo10c} is correct. The proof
also relies on \cite[Theorem 13.2]{Bo10c}, a result which is assumed
to follow from minor modifications of \cite[Theorem 2.3]{Th71}. It
now appears that \cite[Theorem 13.2]{Bo10c} does not so follow and
we do not know whether it remains true.

%\bibitem[Ar98]{Ar98} W. Arveson. \textit{An Invitation to C{*}-algebras}. Graduate Texts in Mathematics, No. 39. Springer- Verlag, New York, reprinted 1998.

%\bibitem[Ba05]{Ba05} K. Ball. \textit{Factors of independent and identically distributed processes with non-amenable group actions}.  Ergodic Theory Dynam. Systems  25  (2005),  no. 3, 711--730.

%\bibitem[BM09]{BM09} M. Björklund and R. Miles. \textit{Entropy range
%problems and actions of locally normal groups}. Discrete Contin. Dyn.
%Syst. 25 (2009), no. 3, 981--989.

%\bibitem[Bo10a]{Bo10a} L. Bowen. \textit{A measure-conjugacy invariant for actions of free groups}. \textit{Ann. of Math. (2)} \textbf{171} (2010), no. 2, 1387--1400.

%\bibitem[Bo10b]{Bo10b} L. Bowen. \textit{Measure conjugacy invariants for actions of countable sofic groups}. J. Amer. Math. Soc. 23 (2010), 217--245.

%\bibliographystyle{alpha} \bibliographystyle{alpha}
%\bibliography{universal_bib}
%\bibliography{C:/Users/HP/Documents/Documents/Math/Bib/universal_bib}
\vskip 0.5cm

\address{Lewis Bowen, Mathematics Department, Mailstop 3368, Texas A\&M University,
College Station, TX 77843-3368 United States.}

\emph{E-mail address:} \texttt{lpbowen@math.tamu.edu} \vskip 0.5cm

\address{Yonatan Gutman, Institut des Hautes Études Scientifiques, Le Bois-Marie,
35 route de Chartres, 91440 Bures-sur-Yvette, France \& Institute
of Mathematics, Polish Academy of Sciences, ul. \'{S}niadeckich 8, 00-956
Warszawa, Poland.}

\emph{E-mail address:} \texttt{yonatan@ihes.fr, y.gutman@impan.pl}
\end{document}